\newtheorem{thm}{Theorem}[section]
\newtheorem{cor}[thm]{Corollary}
\newtheorem{lem}[thm]{Lemma}
\theoremstyle{definition}
\theoremstyle{remark}
\numberwithin{equation}{section}
\DeclareSymbolFont{largesymbolsCM}{OMX}{cmex}{m}{n}
\DeclareMathSymbol{\intop}{\mathop}{largesymbolsCM}{"52}
\begin{document}

\title{Multiple solutions for a class of semilinear elliptic equations}
\author{Yifu Wang}
\dedicatory{School of Mathematical Sciences, Xiamen University,\\Xiamen 361005, China\\[1ex]{\rm 774550477@qq.com}
}

\begin{abstract}
By using truncation technique, minimization method and Morse theory, we obtain
three nontrivial solutions for a class of semilinear elliptic equations.\bigskip

\noindent\textit{Keywords.} Eigenvalues; Laplacian; minimizer; mountain pass lemma; critical groups.\medskip

\noindent\textit{MSC2010.} 35J65; 58E05.

\end{abstract}
\maketitle

\section{Introduction}

In this paper we consider semilinear elliptic boundary value problems of the
form%
\begin{equation}
\left\{
\begin{array}
[c]{ll}%
-\Delta u=g(  u)  \text{,} & \text{in }\Omega\text{,}\\
u=0\text{,} & \text{on }\partial\Omega\text{,}%
\end{array}
\right.  \label{e}%
\end{equation}
where $\Omega$ is a bounded smooth domain in $\mathbb{R}^{N}$, $g$ is a
$C^{1}$-function on $\mathbb{R}$ such that

\begin{enumerate}
\item[$(  \mathbf{g})  $] there are $a^{-}<0<a^{+}$ such that
$g(  a^{\pm})  =0$ and $\delta>0$ such that for some $k\geq2$,%
\[
\lambda_{k}\leq\frac{g(  t)  }{t}\leq\lambda_{k+1}\text{,\qquad for
}t\in(  -\delta,\delta)  \text{.}%
\]

\end{enumerate}

Here, $\lambda_{n}$ denote the $n$th eigenvalue of the Laplacian operator
subject to homogenious Dirichlet boundary condition. In what follows we also
denote by $\phi_{n}$ the eigenfunction corresponding to $\lambda_{n}$ with
$\left\vert \phi_{n}\right\vert _{2}=1$. Note that $(  \mathbf{g})
$ implies that $g(  0)  =0$ and hence $u=0$ is a trivial solution
of \eqref{e}.

\begin{thm}
\label{t1}If $g$ satisfies $(  \mathbf{g})  $, then \eqref{e} has at
least three nontrivial solutions.
\end{thm}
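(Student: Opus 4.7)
The strategy is a classical three-solutions scheme: two nontrivial solutions are obtained by constrained minimization after truncating $g$ on either side of zero, a third via the mountain pass lemma applied between them, and Morse theory is used to separate this third critical point from the trivial solution $u=0$.

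\emph{Step 1 (two signed minimizers).} Since $g$ has no prescribed growth at infinity, I first modify it outside $[a^{-},a^{+}]$ to a bounded $C^{1}$ function $\tilde g$ agreeing with $g$ on $[a^{-},a^{+}]$. The associated energy
\[
\tilde J(u)=\tfrac12\int_{\Omega}|\nabla u|^{2}\,dx-\int_{\Omega}\tilde G(u)\,dx,\qquad \tilde G(t)=\int_{0}^{t}\tilde g(s)\,ds,
\]
is $C^{2}$ on $H_{0}^{1}(\Omega)$, coercive, bounded below, and satisfies the Palais--Smale condition. A further one-sided truncation $g^{+}(t)=\tilde g(t)\chi_{[0,a^{+}]}(t)$ produces a functional $J^{+}$ whose infimum is attained. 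The replacement $u\mapsto\min(\max(u,0),a^{+})$ does not increase $J^{+}$, so the minimizer $u^{+}$ may be taken in $[0,a^{+}]$ and therefore solves \eqref{e}. The pointwise lower bound in $(\mathbf{g})$ gives $G(t)\ge \tfrac{\lambda_{k}}{2}t^{2}$ on $[0,\delta)$, whence
\[
J^{+}(\epsilon\phi_{1})\le \tfrac{\epsilon^{2}}{2}(\lambda_{1}-\lambda_{k})+o(\epsilon^{2})<0
\]
for small $\epsilon>0$ since $k\ge 2$. Hence $u^{+}\ne 0$. A symmetric construction yields $u^{-}\in[a^{-},0]$ with $u^{-}\ne 0$.

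\emph{Step 2 (mountain pass).} By the strong maximum principle and Hopf's lemma, $u^{+}$ lies strictly inside the order interval $[0,a^{+}]$ in the $C^{1}(\overline{\Omega})$ topology, so $C^{1}$-small perturbations remain in $[0,a^{+}]$, where $\tilde g=g^{+}$; consequently $u^{+}$ is a $C^{1}$-local minimum of $\tilde J$. The Brezis--Nirenberg theorem then upgrades this to an $H_{0}^{1}$-local minimum, and analogously for $u^{-}$. With two local minima of $\tilde J$ and Palais--Smale in force, the mountain pass lemma yields a critical point $u^{*}$ satisfying $C_{1}(\tilde J,u^{*})\ne 0$. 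Since local minima have $C_{q}(\tilde J,u^{\pm})=\delta_{q,0}\mathbb{Z}$, automatically $u^{*}\ne u^{\pm}$.

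\emph{Step 3 (ruling out $u^{*}=0$).} It remains to show $u^{*}\ne 0$. I would compute the critical groups at the origin via the splitting $H_{0}^{1}(\Omega)=V^{-}\oplus V^{+}$ with $V^{-}=\bigoplus_{i\le k}E(\lambda_{i})$ and $V^{+}=\bigoplus_{i\ge k+1}E(\lambda_{i})$: the two-sided bound $\lambda_{k}\le g(t)/t\le \lambda_{k+1}$ on $(-\delta,\delta)$ makes $0$ a local maximum of $\tilde J$ on $V^{-}$ and a local minimum on $V^{+}$, yielding
\[
C_{q}(\tilde J,0)=\delta_{q,d_{k}}\mathbb{Z},\qquad d_{k}:=\dim V^{-}\ge 2.
\]
In particular $C_{1}(\tilde J,0)=0$, so $u^{*}\ne 0$. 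A comparison argument confines $u^{*}$ to $[a^{-},a^{+}]$, so $u^{*}$ solves the original equation \eqref{e}.

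\emph{Main obstacle.} The delicate step is Step 3: the hypothesis permits $g'(0)$ to equal $\lambda_{k}$ or $\lambda_{k+1}$, so $0$ may be a degenerate critical point. Executing the splitting argument rigorously — via the shifting theorem together with an angle condition extracted from the pointwise bound $(\mathbf{g})$ — is the technical heart of the argument, after which $d_{k}\ge 2$ delivers $C_{1}(\tilde J,0)=0$ almost for free.
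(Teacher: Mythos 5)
Your proposal is correct and follows essentially the same route as the paper: one-sided truncations and minimization for $u^{\pm}$, the Brezis--Nirenberg $C^{1}$-versus-$H^{1}$ result plus the mountain pass lemma for a third critical point $u^{*}$ with $C_{1}(\tilde J,u^{*})\neq 0$, and the concentration $C_{q}(\tilde J,0)=\delta_{q,k}\mathbb{Z}$ with $k\geq 2$ to rule out $u^{*}=0$. The step you single out as the technical heart (the possibly degenerate critical groups at the origin under the two-sided bound $\lambda_{k}\leq g(t)/t\leq\lambda_{k+1}$) is exactly the step the paper outsources to Proposition 1.1 of Li, Perera and Su, so your plan matches the paper's argument point for point.
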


As a special case of the theorem, let $f\in C^{1}(  \mathbb{R})  $
be such that%
\begin{equation}
\lim_{\left\vert u\right\vert \rightarrow0}\frac{f(  t)  }%
{t}=0\text{,\qquad}\lim_{\left\vert u\right\vert \rightarrow\infty}%
\frac{f(  t)  }{t}=+\infty\text{.} \label{e1}%
\end{equation}
We have the following result.

\begin{cor}
Suppose $\lambda>\lambda_{2}$, $\lambda\neq\lambda_{n}$ for all $n$, and
\eqref{e1} holds, then the boundary value problem%
\[
\left\{
\begin{array}
[c]{ll}%
-\Delta u=\lambda u-f(  u)  \text{,} & \text{in }\Omega\text{,}\\
u=0\text{,} & \text{on }\partial\Omega\text{,}%
\end{array}
\right.
\]
has at least three nontrivial solutions.
\end{cor}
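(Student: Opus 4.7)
The plan is to realize the Corollary as a direct application of Theorem~\ref{t1}. I would set $g(u) := \lambda u - f(u)$, so that $g \in C^{1}(\mathbb{R})$ and the boundary value problem in the Corollary becomes precisely \eqref{e}. The task then reduces to verifying that this $g$ satisfies condition $(\mathbf{g})$.

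First I would produce the zeros $a^{\pm}$ of $g$. Extending $t \mapsto f(t)/t$ continuously to $0$ by the first limit in \eqref{e1}, the ratio starts at $0$ and tends to $+\infty$ as $|t| \to \infty$. Since $\lambda > 0$, applying the intermediate value theorem separately on $(0,\infty)$ and on $(-\infty,0)$ yields $a^{-} < 0 < a^{+}$ with $f(a^{\pm}) = \lambda a^{\pm}$, i.e.\ $g(a^{\pm}) = 0$, giving the sign-change structure required by $(\mathbf{g})$.

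Next I would verify the eigenvalue pinching near the origin. Writing $g(t)/t = \lambda - f(t)/t$, the assumption $\lim_{|t| \to 0} f(t)/t = 0$ gives $\lim_{t \to 0} g(t)/t = \lambda$. Since $\lambda > \lambda_{2}$ and $\lambda$ is not an eigenvalue, there exists a unique $k \geq 2$ with $\lambda_{k} < \lambda < \lambda_{k+1}$; by continuity I can then choose $\delta > 0$ so small that $\lambda_{k} \leq g(t)/t \leq \lambda_{k+1}$ for every $t \in (-\delta,\delta)$. This verifies $(\mathbf{g})$, and Theorem~\ref{t1} delivers the three nontrivial solutions.

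I do not expect a real obstacle here, since the Corollary is essentially a verification. The one point that uses the full strength of the hypotheses is the eigenvalue sandwich: the restriction $\lambda > \lambda_{2}$ is what forces the index $k$ to satisfy $k \geq 2$, while $\lambda \notin \{\lambda_{n}\}$ produces the open interval $(\lambda_{k},\lambda_{k+1})$ that accommodates the small perturbation of $g(t)/t$ away from the limit $\lambda$ on a neighbourhood of the origin.
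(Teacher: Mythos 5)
Your proposal is correct and matches the paper's intent exactly: the Corollary is stated there as a special case of Theorem~\ref{t1} with no separate proof, and the content is precisely the verification of $(\mathbf{g})$ for $g(t)=\lambda t-f(t)$ that you carry out (zeros $a^{\pm}$ via the intermediate value theorem applied to $f(t)/t$, and the pinching $\lambda_{k}<\lambda<\lambda_{k+1}$ with $k\geq 2$ forced by $\lambda>\lambda_{2}$ and $\lambda\neq\lambda_{n}$). No gaps.
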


\section{The first two nontrivial solutions}

It is well known that variational methods are very useful in studying the
existence and multiplicity of solutions for elliptic boundary value problems.
However, since we have no restriction on the growth rate of the right hand
side of the equation in \eqref{e}, there is not a functional $\Phi:H_{0}%
^{1}(  \Omega)  \rightarrow\mathbb{R}$ whose critical points are
solutions of \eqref{e}, so variational methods are not directly applicable. A
well known method to overcome this is the truncation method.

Set%
\[
g_{+}(t)=\left\{
\begin{array}
[c]{ll}%
g(  t)  \text{,} & \text{if }0\leq t\leq a^{+}\text{,}\\
0\text{,} & \text{if }t<0\text{ or }t>a^{+}%
\end{array}
\right.
\]
and consider the truncated problem%
\begin{equation}
\left\{
\begin{array}
[c]{ll}%
-\Delta u=g_{+}(u)\text{,} & \text{in }\Omega\text{,}\\
u=0\text{,} & \text{on }\partial\Omega\text{.}%
\end{array}
\right.  \label{e2}%
\end{equation}
The following result is well known to experts, since we can not find a
reference containning the proof, we would like to present the detailed proof.

\begin{lem}
\label{l1}Suppose $u\in C^{2}(  \Omega)  \cap C(  \bar{\Omega
})  $ is a classical solution of \eqref{e2}, then%
\begin{equation}
0\leq u(  x)  \leq a^{+}\text{,\qquad for }x\in\Omega\text{.}
\label{e4}%
\end{equation}
Hence $u$ is also a classical solution of \eqref{e}.
\end{lem}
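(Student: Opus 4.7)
The statement is a two-sided maximum principle, so my plan is to prove the two inequalities in \eqref{e4} separately by testing the equation against suitable truncations and reading off the vanishing of the Dirichlet integral on the bad set. Once \eqref{e4} is established, $g_+(u)=g(u)$ pointwise on $\Omega$ by the definition of $g_+$, so $-\Delta u=g(u)$ in $\Omega$ with $u=0$ on $\partial\Omega$, which is exactly \eqref{e}; this last step is immediate.

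For the lower bound $u\ge 0$, I set $w:=\max(-u,0)\in H_0^1(\Omega)$. Note $w=0$ on $\partial\Omega$ because $u=0$ there, and $\nabla w=-\nabla u\,\chi_{\{u<0\}}$ almost everywhere. Multiplying $-\Delta u=g_+(u)$ by $w$ and integrating by parts yields
\[
-\int_{\{u<0\}}|\nabla u|^2\,dx=\int_\Omega \nabla u\cdot\nabla w\,dx=\int_\Omega g_+(u)\,w\,dx.
\]
On $\{u<0\}$ the definition of $g_+$ gives $g_+(u)=0$, so the right-hand side is zero, forcing $\nabla u=0$ almost everywhere on $\{u<0\}$. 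Combined with $u=0$ on $\partial\Omega$ and the continuity of $u$, this forces the open set $\{u<0\}$ to be empty.

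For the upper bound $u\le a^+$, I run the symmetric argument with $w:=\max(u-a^+,0)\in H_0^1(\Omega)$; since $u=0<a^+$ on $\partial\Omega$, $w$ indeed has vanishing boundary values, and on $\{u>a^+\}$ we have $g_+(u)=0$, so the same integration by parts gives $\int_{\{u>a^+\}}|\nabla u|^2\,dx=0$ and hence $\{u>a^+\}=\emptyset$. (Alternatively one can invoke the strong maximum principle directly: on each connected component of $\{u<0\}$ the function $u$ is harmonic, hence constant, and reaches $0$ on the component's boundary, a contradiction; and similarly for $\{u>a^+\}$.)

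\textbf{Main obstacle.} There is no real obstacle here; the only point requiring a little care is verifying that the truncations $w$ lie in $H_0^1(\Omega)$ with the correct boundary trace so that the integration by parts is legitimate, which follows from the Stampacchia-type chain rule for Sobolev functions together with $u\in C(\bar\Omega)$ and $u=0$ on $\partial\Omega$. Everything else is routine once the test functions are chosen.
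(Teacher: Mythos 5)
Your main argument is correct in substance but takes a genuinely different route from the paper. The paper works entirely with the classical strong maximum principle: assuming $M=\max_{\bar\Omega}u>a^{+}$ is attained at an interior point, it shows the level set $U=u^{-1}(M)$ is both open (near any point of $U$ one has $u>a^{+}$, so $g_{+}(u)\equiv 0$ and $u$ is harmonic there, hence constant near an interior maximum) and closed in the connected set $\Omega$, so $U=\Omega$, contradicting $u=0$ on $\partial\Omega$; the lower bound is symmetric. Your parenthetical alternative (harmonicity on each component of $\{u<0\}$ or $\{u>a^{+}\}$ plus the maximum principle) is essentially this same argument, just organized by components of the bad sublevel/superlevel set rather than by a connectedness argument on $u^{-1}(M)$, and it is the cleaner match to the stated hypotheses. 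Your primary argument, testing against $\max(-u,0)$ and $\max(u-a^{+},0)$, is the standard weak-formulation proof and buys you a statement that generalizes immediately to $H^{1}_{0}$ weak solutions with no interior $C^{2}$ regularity.

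The one point you dismiss too quickly is the regularity needed to run the energy argument under the lemma's actual hypotheses. The lemma assumes only $u\in C^{2}(\Omega)\cap C(\bar\Omega)$; this does not by itself put $u$ in $H^{1}(\Omega)$ (a function can be smooth inside and continuous up to the boundary while its Dirichlet integral diverges near $\partial\Omega$), so neither $w\in H^{1}_{0}(\Omega)$ nor the integration by parts $\int_{\Omega}\nabla u\cdot\nabla w\,dx=\int_{\Omega}(-\Delta u)\,w\,dx$ is available for free; the Stampacchia chain rule is not where the real work is. To close this you must first upgrade $u$: since $g_{+}$ is bounded and $\partial\Omega$ is smooth, the $W^{2,p}$ solution of the Dirichlet problem with right-hand side $g_{+}(u)$ and zero boundary data coincides with $u$ by the maximum-principle uniqueness for harmonic functions continuous up to the boundary, whence $u\in H^{1}_{0}(\Omega)\cap W^{2,p}(\Omega)$ and your computation is legitimate. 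Either add that step or promote your parenthetical maximum-principle argument to the main proof, since it needs no integrability of $\nabla u$ at all.
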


\begin{proof}
Firstly we show%
\[
M=\max_{\bar{\Omega}}u\leq a^{+}\text{.}%
\]
If this is not true, then there is $x_{0}\in\Omega$ such
\[
u(  x_{0})  =M>a^{+}\text{.}%
\]
Let $U=u^{-1}(  M)  $, then $U$ is a closed subset of $\Omega$.

Now pick $x\in U$, then
\[
u(  x)  =M>a^{+}%
\]
So there is $r>0$ such that%
\[
u(y)>a^{+}\text{,\qquad for all }y\in B_{r}(  x)  \text{.}%
\]
Hence $g_{+}(  u)  \equiv0$ in $B_{r}(  x)  $ and
because $u$ is a classical solution of \eqref{e2}, it is harmonic in
$B_{r}(  x)  $. Therefore, since%
\[
u(  x)  =\max_{B_{r}(x)}u\text{,}%
\]
$u$ must be a constant function in $B_{r}(  x)  $. We conclude that
$B_{r}(  x)  \subset U$.

Consequently, $U$ is not only closed but also open in $\Omega$. Hence
$U=\Omega$. This is a contradiction to the condition that $u=0$ on
$\partial\Omega$.

In a similar manner, we can show that%
\[
m=\min_{\bar{\Omega}}u\geq0\text{,}%
\]
and the desired result follows.
\end{proof}

Since $g_{+}$ is bounded, let
\[
G_{+}(  t)  =\int_{0}^{t}g_{+}(  s)  ds\text{,}%
\]
we may define a functional $\Phi_{+}:H_{0}^{1}(\Omega)\rightarrow\mathbb{R}$,%
\[
\Phi_{+}(u)=\frac{1}{2}\int_{\Omega}\left\vert \nabla u\right\vert ^{2}%
dx-\int_{\Omega}G_{+}(  u)  dx\text{.}%
\]
Then $\Phi_{+}$ is of class $C^{1}$ and because we have assumed $g\in
C^{1}(  \mathbb{R})  $, the critical points of $\Phi_{+}$ are
classical solutions of \eqref{e2}.

\begin{lem}
$\Phi_{+}$ is coercive and attains its minimum at some $u_{+}\in H_{0}%
^{1}(  \Omega)  $, which is a nontrivial solution of \eqref{e2}.
\end{lem}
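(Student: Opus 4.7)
The plan has three stages: coercivity, existence of a minimizer via the direct method, and verification that the minimizer is nontrivial.

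First I would establish coercivity. Because $g_+$ is supported in $[0,a^+]$ and continuous, it is bounded; consequently $G_+$ is uniformly bounded on $\mathbb{R}$, say $0\leq G_+(t)\leq C$ for every $t$. Therefore
\[
\Phi_+(u)\;\geq\;\tfrac12\int_\Omega|\nabla u|^2\,dx-C|\Omega|,
\]
so $\Phi_+(u)\to+\infty$ whenever $\|u\|_{H^1_0}\to\infty$, which is coercivity.

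Next I would apply the direct method. Take a minimizing sequence $\{u_n\}\subset H^1_0(\Omega)$; by coercivity it is bounded, so we may extract a subsequence with $u_n\rightharpoonup u_+$ in $H^1_0(\Omega)$ and $u_n\to u_+$ in $L^2(\Omega)$ and a.e.\ on $\Omega$. The Dirichlet integral is weakly lower semicontinuous. For the potential term, since $G_+$ is bounded and continuous, dominated convergence gives $\int_\Omega G_+(u_n)\,dx\to\int_\Omega G_+(u_+)\,dx$, so
\[
\Phi_+(u_+)\;\leq\;\liminf_{n\to\infty}\Phi_+(u_n)\;=\;\inf_{H^1_0(\Omega)}\Phi_+.
\]
Hence $u_+$ is a minimizer; since $\Phi_+\in C^1$, it is a critical point and therefore (by standard elliptic regularity and Lemma \ref{l1}) a classical solution of \eqref{e2}.

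The main point, and the step I expect to require the most care, is nontriviality: I must show $\Phi_+(u_+)<0=\Phi_+(0)$. Here I would test $\Phi_+$ against $\varepsilon\phi_1$, where $\phi_1>0$ is the normalized first eigenfunction. Since $\phi_1\in L^\infty(\Omega)$, for $\varepsilon>0$ small enough we have $0<\varepsilon\phi_1(x)<\min(\delta,a^+)$ on $\Omega$, so $g_+(\varepsilon\phi_1)=g(\varepsilon\phi_1)$, and from hypothesis $(\mathbf{g})$ together with $k\geq2$,
\[
G_+(\varepsilon\phi_1(x))\;=\;\int_0^{\varepsilon\phi_1(x)}g(s)\,ds\;\geq\;\frac{\lambda_k}{2}\,\varepsilon^2\phi_1(x)^2.
\]
Using $\int_\Omega|\nabla\phi_1|^2\,dx=\lambda_1$ and $|\phi_1|_2=1$, this yields
\[
\Phi_+(\varepsilon\phi_1)\;\leq\;\frac{\varepsilon^2}{2}(\lambda_1-\lambda_k)\;<\;0,
\]
so $\inf\Phi_+<0$ and hence $u_+\neq0$. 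This produces the nontrivial solution $u_+$ of \eqref{e2} asserted by the lemma.
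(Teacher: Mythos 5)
Your proposal is correct and follows essentially the same route as the paper: coercivity from the boundedness of $g_{+}$, the direct method for attainment of the infimum, and nontriviality by testing with $\varepsilon\phi_{1}$ and hypothesis $(\mathbf{g})$ to get $\inf\Phi_{+}<0$ (the paper uses $\lambda_{2}\leq\lambda_{k}$ where you use $\lambda_{k}$, and bounds $G_{+}$ linearly where you bound it by a constant --- both immaterial). One cosmetic caveat: your claim $0\leq G_{+}(t)$ is not justified, since $g$ need not be nonnegative on all of $[0,a^{+}]$, but only the upper bound $G_{+}(t)\leq C$ is actually used, so nothing breaks.
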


\begin{proof}
Firstly, by $(  \mathbf{g})  $ we see that, there is $\delta>0$,
such that if $u\in(  0,\delta)  $ then%
\[
G_{+}(  u)  \geq\frac{\lambda_{2}}{2}u^{2}\text{.}%
\]
Now for $s\in(  0,\left\vert \phi_{1}\right\vert _{\infty}^{-1}%
\delta)  $, we have%
\[
0<s\phi_{1}(x)<\delta\text{,\qquad for }x\in\Omega\text{.}%
\]
Hence%
\begin{align*}
\Phi_{+}(  s\phi_{1})   &  =\frac{\lambda_{1}s^{2}}{2}\int_{\Omega
}\left\vert \phi_{1}\right\vert ^{2}dx-\int_{\Omega}G_{+}(  s\phi
_{1})  dx\\
&  <\frac{\lambda_{1}s^{2}}{2}\int_{\Omega}\left\vert \phi_{1}\right\vert
^{2}dx-\frac{\lambda_{2}s^{2}}{2}\int_{\Omega}\left\vert \phi_{1}\right\vert
^{2}dx\\
&  =\frac{\lambda_{1}-\lambda_{2}}{2}s^{2}\int_{\Omega}\left\vert \phi
_{1}\right\vert ^{2}dx<0\text{.}%
\end{align*}
Consequently,%
\[
\inf_{H_{0}^{1}(  \Omega)  }\Phi_{+}<0\text{.}%
\]

Obviously, $g_{+}$ is bounded, hence there exist $C_{1}>0$, $C_{2}>0$ such
that%
\[
\left\vert G_{+}(  u)  \right\vert \leq C_{1}+C_{2}\left\vert
u\right\vert \text{.}%
\]
Hence%
\begin{align*}
\Phi_{+}(  u)   &  =\frac{1}{2}\int_{\Omega}\left\vert \nabla
u\right\vert ^{2}dx-\int_{\Omega}G_{+}(  u)  dx\\
&  \geq\frac{1}{2}\left\Vert u\right\Vert ^{2}-C_{1}\left\vert \Omega
\right\vert -C_{2}\left\vert u\right\vert _{1}\\
&  \geq\frac{1}{2}\left\Vert u\right\Vert ^{2}-C_{2}S_{2}\left\Vert
u\right\Vert -C_{1}\left\vert \Omega\right\vert \rightarrow+\infty\text{,}%
\end{align*}
where $S_{p}$ is the Sobolev constant of the embedding $H_{0}^{1}(
\Omega)  \hookrightarrow L^{p}(  \Omega)  $,%
\[
\left\vert u\right\vert _{p}\leq S_{p}\left\Vert u\right\Vert \text{,\qquad
}u\in H_{0}^{1}(  \Omega)  \text{.}%
\]
We see that $\Phi_{+}$ is coercive. It is then well known that there is a
$u_{+}\in H_{0}^{1}(  \Omega)  $ such that%
\[
\Phi_{+}(  u^{+})  =\inf_{H_{0}^{1}(  \Omega)  }\Phi
_{+}<0\text{.}%
\]
Since $\Phi_{+}(  0)  =0$, it follows that $u^{+}$ is a nonzero
critical point of $\Phi_{+}$. Thus $u^{+}$ is a nontrivial solution of
\eqref{e2}.
\end{proof}

Acturally by elliptic regularity, $u^{+}$ is a classical solution of \eqref{e2}.
According to Lemma \ref{l1}, $u^{+}$ satisfies \eqref{e4} and it is a positive
solution of \eqref{e}.

In a similar manner, setting%
\[
g_{-}(t)=\left\{
\begin{array}
[c]{ll}%
g(  t)  \text{,} & \text{if }a^{-}\leq t\leq0\text{,}\\
0\text{,} & \text{if }t>0\text{ or }t<a^{-}%
\end{array}
\right.
\]
and consider%
\[
\left\{
\begin{array}
[c]{ll}%
-\Delta u=g_{-}(u)\text{,} & \text{in }\Omega\text{,}\\
u=0\text{,} & \text{on }\partial\Omega\text{,}%
\end{array}
\right.
\]
we can obtain a negative solution $u^{-}$ of the problem \eqref{e}.

\section{The third nontrivial solution}

To get the third nontrivial solution, we shall apply Morse theory (see
\cite{MR1196690} or \cite[Chapter 8]{MR982267} for a systematic exposition).
The key concept in this theory is critical group.

Let $\Phi$ be a $C^{1}$-functional in a Banach space $X$ and $u$ a critical
point of $\Phi$ with $\Phi(  u)  =c$. We call%
\[
C_{q}(  \Phi,u)  =H_{q}(  \Phi^{c},\Phi^{c}\backslash\left\{
u\right\}  )
\]
the $q$th critical group of $\Phi$ at $u$, where $q=0,1,\ldots$, $H_{q}$
stands for the $q$th singular homology group with coefficient in $\mathbb{Z}$, and
$\Phi^{c}=\Phi^{-1}(-\infty,c]$.

It is known that if $u$ is a critical point produced via the mountain pass
lemma of Ambrosetti and Rabinowitz \cite{MR0370183}, then%
\[
C_{1}(  \Phi,u)  \neq0\text{.}%
\]
To prove our theorem, we need to truncate the problem once more. Let%
\[
\tilde{g}(t)=\left\{
\begin{array}
[c]{ll}%
g(  t)  \text{,} & \text{if }a^{-}\leq t\leq a^{+}\text{,}\\
0\text{,} & \text{if }t\notin\lbrack a^{-},a^{+}]
\end{array}
\right.
\]
and consider%
\begin{equation}
\left\{
\begin{array}
[c]{ll}%
-\Delta u=\tilde{g}(u)\text{,} & \text{in }\Omega\text{,}\\
u=0\text{,} & \text{on }\partial\Omega\text{.}%
\end{array}
\right.  \label{e5}%
\end{equation}
The solutions of \eqref{e5} are critical points of the $C^{2-0}$-functional $\Phi:H_{0}^{1}(
\Omega)  \rightarrow\mathbb{R}$,%
\[
\Phi(  u)  =\frac{1}{2}\int_{\Omega}\left\vert \nabla u\right\vert
^{2}dx-\int_{\Omega}G(  u)  \text{,\qquad}G(  u)
=\int_{0}^{u}\tilde{g}(  s)  ds\text{.}%
\]
Similar to Lemma \ref{l1}, we have the following result.

\begin{lem}
Suppose $u\in C^{2}(  \Omega)  \cap C(  \bar{\Omega})  $
is a classical solution of \eqref{e5}, then%
\begin{equation}
a^{-}\leq u(  x)  \leq a^{+}\text{,\qquad for }x\in\Omega\text{.}
\label{e6}%
\end{equation}
Hence $u$ is also a classical solution of \eqref{e}.
\end{lem}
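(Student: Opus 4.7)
The plan is to mirror the proof of Lemma \ref{l1}, applying the same strong-maximum-principle style argument at both ends. Specifically, I would first establish the upper bound $\max_{\bar\Omega} u \leq a^+$ by contradiction, and then the lower bound $\min_{\bar\Omega} u \geq a^-$ in an entirely analogous fashion. Once both bounds are in hand, the final sentence is immediate: on $[a^-,a^+]$ we have $\tilde g \equiv g$, so any classical solution of \eqref{e5} taking values in $[a^-,a^+]$ solves $-\Delta u = g(u)$ with the same Dirichlet data, i.e.\ \eqref{e}.

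For the upper bound, I would set $M=\max_{\bar\Omega}u$ and suppose for contradiction that $M>a^+$. Let $U=u^{-1}(M)\subset\Omega$; this set is closed in $\Omega$ and nonempty since $u=0$ on $\partial\Omega<M$. Pick $x\in U$. By continuity there is $r>0$ with $u(y)>a^+$ on $B_r(x)\subset\Omega$, and by the definition of $\tilde g$ this forces $\tilde g(u)\equiv 0$ on $B_r(x)$. Hence $u$ is harmonic on $B_r(x)$, attains its maximum there at the interior point $x$, and therefore must be constant on $B_r(x)$ by the strong maximum principle. Thus $B_r(x)\subset U$, so $U$ is open as well as closed in the connected set $\Omega$; this gives $U=\Omega$, contradicting $u\!\restriction_{\partial\Omega}=0<M$.

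For the lower bound, I would run the symmetric argument on $m=\min_{\bar\Omega}u$, supposing $m<a^-$. At a point $x$ where $u(x)=m$, continuity yields a ball on which $u<a^-$; by the definition of $\tilde g$ we again have $\tilde g(u)\equiv 0$, so $u$ is harmonic on that ball. Applying the strong maximum principle to $-u$ (or, equivalently, observing that a harmonic function attaining its minimum interiorly is locally constant) shows that the level set $u^{-1}(m)$ is open as well as closed in $\Omega$, hence equals $\Omega$, contradicting the boundary condition.

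I do not expect any serious obstacle: the whole argument is a direct transcription of the proof of Lemma \ref{l1}, the only new ingredient being that one now has to exclude downward excursions below $a^-$ as well as upward excursions above $a^+$. The essential fact used in both halves is that $\tilde g$ vanishes outside $[a^-,a^+]$, which is exactly the symmetric analogue of the property of $g_+$ exploited in Lemma \ref{l1}.
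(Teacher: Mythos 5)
Your proposal is correct and is exactly what the paper intends: the paper omits the proof, stating only that the result follows ``Similar to Lemma \ref{l1},'' and your argument is a faithful two-sided transcription of that lemma's open-and-closed level-set argument, using that $\tilde g$ vanishes outside $[a^-,a^+]$ so that $u$ is harmonic wherever it exceeds $a^+$ or drops below $a^-$.
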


Consider the solution $u^{+}$ of \eqref{e} obtained in the last section: it is a
global minimizer of $\Phi_{+}$ on $H_{0}^{1}(  \Omega)  $. By
strong maximum principle we know that%
\begin{equation}
u^{+}>0\text{\quad in }\Omega\text{,\qquad}\frac{\partial u^{+}}{\partial\nu
}>0\text{\quad on }\partial\Omega\text{,}\label{ee}%
\end{equation}
where $\nu$ is the interior normal on $\partial\Omega$. From this it follows
that $u^{+}$ is an interior point of the set%
\[
P=\left\{  \left.  u\in C^{1}(  \bar{\Omega})  \right\vert
\,u>0\text{ in }\Omega\text{, }u=0\text{ on }\partial\Omega\right\}
\]
with respect to the $C^{1}$-topology. In fact, if this is not true, we may
find $\left\{  v_{n}\right\}  \subset C^{1}(  \bar{\Omega})  $ and
$\left\{  x_{n}\right\}  \subset\Omega$ such that $v_{n}(  x_{n})
<0$ and%
\begin{equation}
\sup_{\Omega}\left\vert v_{n}-u^{+}\right\vert +\sup_{\Omega}\left\vert \nabla
v_{n}-\nabla u^{+}\right\vert \rightarrow0\text{.}\label{e7}%
\end{equation}
Since $\Omega$ is bounded, we may assume $x_{n}\rightarrow x^{\ast}\in
\bar{\Omega}$. Because%
\begin{align*}
\left\vert v_{n}(  x_{n})  -u^{+}(  x^{\ast})
\right\vert  &  \leq\left\vert v_{n}(  x_{n})  -u^{+}(
x_{n})  \right\vert +\left\vert u^{+}(  x_{n})  -u^{+}(
x^{\ast})  \right\vert \\
&  \leq\sup_{\Omega}\left\vert u^{+}-v_{n}\right\vert +\left\vert u^{+}(
x_{n})  -u^{+}(  x^{\ast})  \right\vert \rightarrow0\text{,}%
\end{align*}
we see that%
\[
u^{+}(  x^{\ast})  =\lim_{n\rightarrow\infty}v_{n}(
x_{n})  \leq0\text{.}%
\]
Because $u^{+}>0$ in $\Omega$, we deduce $x^{\ast}\in\partial\Omega$.
Similarly, using \eqref{e7} again we have%
\[
\nabla u^{+}(  x^{\ast})  =\lim_{n\rightarrow\infty}\nabla
v_{n}(  x_{n})  =0\text{,}%
\]
Thus%
\[
\left.  \frac{\partial u^{+}}{\partial\nu}\right\vert _{x^{\ast}}=\nabla
u^{+}(  x^{\ast})  \cdot\nu=0\text{,}%
\]
a contradiction to \eqref{ee}.

Thus, there exists $r>0$ such that if $u\in C^{1}(  \bar{\Omega})
$, $u=0$ on $\partial\Omega$ and%
\[
\left\Vert u-u^{+}\right\Vert _{C^{1}}<r\text{,}%
\]
then $u>0$ in $\Omega$. Consequently, $\Phi(  u)  =\Phi_{+}(
u)  $. Therefore, $u^{+}$ is also a local minimizer of $\Phi$ in $C^{1}$-topology.

By a result of Br\'{e}zis and Nirenberg \cite{MR1239032}, we conclude that
$u^{+}$ is a local minimizer of $\Phi$ in the $H^{1}$-topology.

Similarly, the negative solution $u^{-}$ of \eqref{e} is also a local minimizer
of $\Phi$. Because $\Phi$ satisfies the Palais-Smale condition, from the two
local minimizer $u^{+}$ and $u^{-}$ we can obtain a third critical point
$u^{\ast}$ of $\Phi:H_{0}^{1}(  \Omega)  \rightarrow\mathbb{R}$ via
the mountain pass lemma. Since $u^{\ast}$ is of mountain pass type, we have%
\begin{equation}
C_{1}(  \Phi,u^{\ast})  \neq0\text{.}\label{e88}%
\end{equation}
On the other hand, by our assumption $(  \mathbf{g})  $ and
\cite[Proposition 1.1]{MR1838509}, we have%
\begin{equation}
C_{q}(  \Phi,0)  =\left\{
\begin{array}
[c]{ll}%
\mathbb{Z}\text{,} & \text{if }q=k\text{,}\\
0\text{,} & \text{if }q\neq k\text{.}%
\end{array}
\right.  \label{e9}%
\end{equation}
Since $k\geq2$, From \eqref{e88} and \eqref{e9} we see that%
\[
C_{1}(  \Phi,u^{\ast})  \neq C_{1}(  \Phi,0)  \text{.}%
\]
Hence, $u^{\ast}\neq0$, which is the third nontrivial solution of \eqref{e}.

\end{document}